\newcommand{\Bcal}{\mathcal{B}}
\newcommand{\Ccal}{\mathcal{C}}
\newcommand{\Dcal}{\mathcal{D}}
\newcommand{\Ecal}{\mathcal{E}}
\newcommand{\Mcal}{\mathcal{M}}
\newcommand{\Tcal}{\mathcal{T}}
\newcommand{\Ucal}{\mathcal{U}}
\newcommand{\Z}{\mathbb{Z}}
\newcommand{\A}{\mathbb{A}}
\newcommand{\Rb}{\mathbf{R}}
\newcommand{\Tb}{\mathbf{T}}
\newcommand{\Sb}{\mathbf{S}}
\newcommand{\Xb}{\mathbf{X}}
\newcommand{\Yb}{\mathbf{Y}}
\newcommand{\Zb}{\mathbf{Z}}
\newcommand{\del}{\delta}
\newcommand{\la}{\lambda}
\newcommand{\ka}{\kappa}
\newcommand{\ol}{\overline}
\newcommand{\br}{\vspace{3 mm}}
\newcommand{\imp}{\Rightarrow}
\newcommand{\rest}{\upharpoonright}
\newcommand{\diam}{{\rm{diam\,}}}
\theoremstyle{plain}
\newtheorem{thm}{Theorem}[section]
\newtheorem{lem}[thm]{Lemma}
\newtheorem{prop}[thm]{Proposition}
\theoremstyle{definition}
\newtheorem{question}[thm]{Question}
\begin{document}


\title
[On systems  disjoint from every ergodic system]
{On the class of systems which are disjoint from every ergodic system}

\author{Eli Glasner and Benjamin Weiss}

\address{Department of Mathematics\\
     Tel Aviv University\\
         Tel Aviv\\
         Israel}
\email{glasner@math.tau.ac.il}

\address {Institute of Mathematics\\
 Hebrew University of Jerusalem\\
Jerusalem\\
 Israel}
\email{weiss@math.huji.ac.il}


\setcounter{secnumdepth}{2}



\setcounter{section}{0}



%


\keywords{Ergodic systems, disjointness, amenable groups}

\thanks{{
MSC2020:
37A25, 37A05, 37A15}}

\begin{date}
{May 1,  2024}
\end{date}

\maketitle


\setcounter{secnumdepth}{2}


\setcounter{section}{0}


\begin{abstract}
In this note we give a fairly direct proof of a recent theorem of G\'{o}rska,  Lema\'{n}czyk and de la Rue
which characterises the class of measure preserving transformations that are disjoint from every ergodic 
measure preserving transformation. Our proof works just as well for any countable acting group.
\end{abstract}

\section*{Introduction}

  A recent preprint  \cite{{GLdR}} contains the following result. 
Let $\Ecal$ denote the class of ergodic measure preserving transformations,  and let $\Ecal^\perp$
denote those transformations that are disjoint from all elements of $\Ecal$.
Then,  $\Xb= (X, \Bcal, \mu, T) \in \Ecal^\perp$ if and only if, in the
 space  $(\bar{X}, \bar{\mu})$  of
ergodic components of  $\Xb$,
for $\bar{\mu} \times \bar{\mu}$-a.e. pair
$(\bar{x}_1, \bar{x}_2)$
the ergodic transformations $(T_{\bar{x}_1}, T_{\bar{x}_2})$ are disjoint.

What is surprising about this result is that it shows that $\Ecal^\perp$ is a much richer class than was previously thought.
The fact that $\Ecal^\perp$ is not empty is easy to see because the identity transformation on a non atomic
measure space (like $[0,1]$ with Lebesgue measure $\lambda$) is an element of $\Ecal^\perp$,
as the following simple argument shows.

Suppose that $\Xb$ is ergodic and $\rho$, a measure on $[0,1] \times X$, is a joining of the identity on $[0,1]$
and $\Xb$. Decompose $\rho$ over $\la$
$$
\rho = \int \del_t \times \rho_t \, d \la(t),
$$
so that $\int \rho_t \, d\la(t) = \mu$ and each $\rho_t$ is $T$-invariant.
Now, since $\Xb$ is ergodic this means that for $\la$-a.e. $t$
we have $\rho_t = \mu$, which shows that $\rho$ is the product measure $\la \times \mu$.

\br

We were intrigued by this result and found a fairly direct proof that works
just as well for any countable group $G$. In the following note we present this proof. For a general group we don't know 
how to construct non-trivial examples in $\Ecal^\perp$ , however this can be done for an arbitrary amenable group as we will show. 
In this connection we also give an extension to amenable groups of an old theorem of Andres del Junco \cite{dJ-81} to the effect that for a fixed 
ergodic transformation $T$ the set of those $S \in \Ecal$ that are disjoint from $T$ are a dense  $G_\del$.

It is easy to give a relative version of the main result and we point out in the penultimate section how this is done. 
In the final section we give a simple proof of the fact that $\Ecal(G)^{\perp}$ is closed under direct products
(Theorem 30 in \cite{BGdR} ).

\br

\section{The set of disjoint pairs is $G_\del$}
We denote by MPT the Polish group of all invertible probability measure preserving transformations
$([0,1], \Bcal, \mu, T)$, with $\mu$ the Lebesgue measure on $[0,1]$.
The subset of MPT consisting of the ergodic elements will be denoted by $\Ecal = \Ecal(\Z)$.
It is well known that $\Ecal$ is a $G_\del$-set.

For a pair of elements $(S,T)$ in MPT a {\em joining} is a probability measure $\la$ on $[0,1] \times [0,1]$
that projects onto Lebesgue measure $\mu$ on both coordinates
and is invariant under $S \times T$. A joining $\la$ defines a Markov operator $M = M_\la : L^2([0,1], \mu)
\to L^2([0,1], \mu)$ by
$$
Mf(x) = \int_0^1 f(y) \, d \la_x(y),
$$
where  the measures $\la_x$ arise via the disintegration of $\la$ over the first  coordinate:
$$
\la = \int_0^1 \del_x \times \la_x \, d\mu.
$$
We then have
\begin{align*}
\langle g, Mf \rangle
& = \int\int g(x) f(y) \, d \la_x(y)\, d\mu(x) =
\int g(x) f(y) \, d\la(x,y)\\
& = \int g(Sx) f(Ty) \, d\la(x,y)  = \langle  Sg, MTf \rangle\\
&= \langle g, S^{-1} M T f \rangle,
 \end{align*}
so that $M = S^{-1}MT$ or $MT = SM$.
Conversely, it is clear that every Markov operator $M$ which satisfies this
equality determines a joining $\la$ of $S$ and $T$.

Let $\Mcal$ denote the space of all the Markov operators $M : L^2( \mu) \to L^2( \mu)$.
We equip $\Mcal$ with the weak operator topology.
Recall that the map $T \mapsto U_T$, the Koopman operator associated with $T \in$ MPT,
is a homeomorphism from the Polish group MPT onto its image,  a closed subgroup of the group
$\Ucal$ of unitary operators on $L^2(\mu)$, where the latter is equipped with
the strong operator topology.

%

\begin{lem}
Let $(S_n,T_n) \to (S,T)$ be a convergent sequence in {\rm MPT} $\times$ {\rm MPT},
and for each $n$, let $\la_n$ be a joining of the pair $(S_n,T_n)$ with $M_n = M_{\la_n}$.
Suppose further that
the sequence $\la_n$ converges to a measure $\la$ with $M = M_\la$. Then $\la$ is a joining of $(S,T)$;
i.e. $M U_T = U_S M$.
\end{lem}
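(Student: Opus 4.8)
The plan is to deduce the limiting intertwining relation $MU_T=U_SM$ from the given relations $M_nU_{T_n}=U_{S_n}M_n$ by a routine operator-topology argument, the one point needing care being that weak-operator convergence is not preserved under multiplication, so the strong convergence of the Koopman operators must be exploited. The first step is to translate the hypothesis $\la_n\to\la$ into operator language. Since all the $\la_n$ and $\la$ are probability measures on the compact space $[0,1]\times[0,1]$ and each $M_n$ is a Markov operator with $\|M_n\|\le 1$, weak$^*$ convergence $\la_n\to\la$ is equivalent to $M_n\to M$ in the weak operator topology: for continuous $f,g$ one has $\langle g,M_nf\rangle=\int g(x)f(y)\,d\la_n$, the functions $g\otimes f$ span a dense subspace of $C([0,1]\times[0,1])$ by Stone--Weierstrass, and the uniform bound $\|M_n\|\le 1$ allows the passage from continuous $f,g$ to arbitrary $f,g\in L^2(\mu)$. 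Thus it suffices to use the single fact that $M_n\to M$ in the weak operator topology. (Incidentally $\la$ then has both marginals equal to $\mu$, marginals being preserved under weak$^*$ limits, so $\la$ is a genuine joining of $(S,T)$ once the intertwining relation is in hand.)

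Next I would use that convergence $(S_n,T_n)\to(S,T)$ in {\rm MPT} $\times$ {\rm MPT} is, via the homeomorphism $T\mapsto U_T$ onto a closed subgroup of $\Ucal$, exactly strong-operator convergence $U_{S_n}\to U_S$ and $U_{T_n}\to U_T$; moreover, since {\rm MPT} is a topological group, inversion is continuous, so also $U_{S_n}^{*}=U_{S_n^{-1}}\to U_{S^{-1}}=U_S^{*}$ and likewise for $T$. I then invoke two elementary facts about the weak and strong operator topologies: (i) if $A_n\to A$ in WOT with $\sup_n\|A_n\|<\infty$ and $B_n\to B$ in SOT, then $A_nB_n\to AB$ in WOT; and (ii) if in addition $B_n^{*}\to B^{*}$ in SOT, then also $B_nA_n\to BA$ in WOT. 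Each follows in a couple of lines by writing $\langle g,A_nB_nf\rangle-\langle g,ABf\rangle=\langle g,A_n(B_nf-Bf)\rangle+\langle g,(A_n-A)Bf\rangle$ (respectively moving $B_n$ to the other side as $B_n^{*}$), bounding the first term via the uniform norm bound and the strong convergence applied to the fixed vector, and the second via the weak convergence.

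Finally, applying (i) with $A_n=M_n$ and $B_n=U_{T_n}$ gives $M_nU_{T_n}\to MU_T$ in WOT, and applying (ii) with $A_n=M_n$ and $B_n=U_{S_n}$ (whose adjoints $U_{S_n^{-1}}$ converge strongly) gives $U_{S_n}M_n\to U_SM$ in WOT; since $M_nU_{T_n}=U_{S_n}M_n$ for every $n$, the two limits must agree, i.e. $MU_T=U_SM$, which is the assertion. I expect the only genuine obstacle is the one already flagged: one cannot naively take WOT limits on both sides of a product of two merely weakly convergent sequences, and the argument succeeds precisely because the Koopman factors, together with their inverses, converge strongly; the rest is bookkeeping.
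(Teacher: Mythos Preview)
Your argument is correct and is essentially the paper's own proof: the paper writes out the estimate for $|\langle f, M_n U_{T_n} g\rangle - \langle f, MU_T g\rangle|$ directly, which is precisely your fact (i), and then asserts the analogous convergence on the $S$ side. If anything you are more careful than the paper, explicitly translating $\la_n\to\la$ into $M_n\to M$ in WOT and noting that the $U_{S_n}M_n\to U_SM$ step requires strong convergence of the adjoints $U_{S_n}^{*}$.
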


\begin{proof}
Let $f, g \in L^2(\mu)$, then
\begin{gather*}
|\langle f, M_n U_{T_n} g \rangle - \langle f, MU_{T}g \rangle | \\
 \leq
|\langle f, M_n U_{T_n} g \rangle - \langle f, M_nU_{T}g \rangle | \cdot
|\langle f, M_n U_{T} g \rangle - \langle f, MU_{T}g \rangle | \\
 \leq
\|f\| \cdot \|M_n\| \cdot \| U_{T_n} g - U_T g\|
+ |\langle f, M_n U_{T} g \rangle - \langle f, MU_{T}g \rangle |.
\end{gather*}
For sufficiently large $n$ both terms in the last line are small and we conclude that $M_n U_{T_n} \to MU_T$
in the weak operator topology. A similar calculation shows that $M_n U_{S_n} \to MU_S$ and hence $M U_T = U_S M$.
\end{proof}

\begin{prop}\label{Gdelta}
In MPT $\times$ MPT the set $\Dcal$ of pairs $(S,T)$ such that $S$ is disjoint from $T$ is $G_\del$.
\end{prop}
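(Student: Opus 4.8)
The plan is to prove that the complement $({\rm MPT}\times{\rm MPT})\setminus\Dcal$ of non-disjoint pairs is an $F_\sig$ set. Recall that $S$ is disjoint from $T$ exactly when the only joining of $(S,T)$ is the product measure $\mu\times\mu$ on the square $[0,1]^2$; equivalently, $(S,T)$ is \emph{not} disjoint iff there is a joining $\la$ of $(S,T)$ with $\la\neq\mu\times\mu$. Thus it suffices to write the set of non-disjoint pairs as a countable union of closed subsets of ${\rm MPT}\times{\rm MPT}$.

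First I would fix a countable dense subset $\{h_k:k\in\N\}$ of $C([0,1]^2)$ in the uniform norm, which exists since $[0,1]^2$ is a compact metric space. Two Borel probability measures on $[0,1]^2$ coincide iff they assign equal integrals to every $h_k$, so a probability measure $\la$ differs from $\mu\times\mu$ iff there are $k\in\N$ and a rational $\del>0$ with $|\int h_k\,d\la-\int h_k\,d(\mu\times\mu)|\ge\del$. For each such pair $(k,\del)$ let $B_{k,\del}$ denote the set of those $(S,T)\in{\rm MPT}\times{\rm MPT}$ that admit some joining $\la$ with $|\int h_k\,d\la-\int h_k\,d(\mu\times\mu)|\ge\del$. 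Then the set of non-disjoint pairs equals $\bigcup B_{k,\del}$, the union taken over $k\in\N$ and over rational $\del>0$, so it is enough to check that each $B_{k,\del}$ is closed; since ${\rm MPT}\times{\rm MPT}$ is Polish, this may be verified sequentially.

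To that end, suppose $(S_n,T_n)\to(S,T)$ with each $(S_n,T_n)\in B_{k,\del}$, and pick joinings $\la_n$ of $(S_n,T_n)$ witnessing this. The space $\Prob([0,1]^2)$ of Borel probability measures on $[0,1]^2$ is compact and metrizable in the weak$^*$ topology, so after passing to a subsequence I may assume $\la_n\to\la$ weak$^*$. Since the two coordinate projections of the square are continuous, both marginals of $\la$ are the weak$^*$-limits of the marginals of the $\la_n$, hence both equal $\mu$; in particular the Markov operator $M_\la$ is defined. Moreover weak$^*$ convergence of probability measures on the compact square $[0,1]^2$ forces $M_{\la_n}\to M_\la$ in the weak operator topology — one tests first against continuous functions $f,g$, for which $\langle g,M_{\la_n}f\rangle=\int g(x)f(y)\,d\la_n(x,y)$, and then extends to all of $L^2(\mu)$ using the uniform bound $\|M_{\la_n}\|\le 1$. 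Hence the hypotheses of the preceding Lemma are satisfied and $\la$ is a joining of $(S,T)$. Finally, continuity of $h_k$ gives $|\int h_k\,d\la-\int h_k\,d(\mu\times\mu)|=\lim_n|\int h_k\,d\la_n-\int h_k\,d(\mu\times\mu)|\ge\del$, so $(S,T)\in B_{k,\del}$. Thus each $B_{k,\del}$ is closed, the set of non-disjoint pairs is $F_\sig$, and $\Dcal$ is $G_\del$.

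The only substantial ingredient is that a weak$^*$ limit of joinings of $(S_n,T_n)$ is again a joining of the limit pair $(S,T)$, which is precisely the content of the preceding Lemma; everything else is the soft combination of the weak$^*$ compactness of $\Prob([0,1]^2)$ and the separability of $C([0,1]^2)$, the latter being what turns ``$S$ is disjoint from $T$'' into a countable conjunction of conditions. I therefore do not expect a real obstacle; the one point requiring care is the passage from weak$^*$ convergence of the measures $\la_n$ to weak-operator convergence of the associated Markov operators $M_{\la_n}$, which is what lets the preceding Lemma be quoted directly.
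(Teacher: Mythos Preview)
Your proof is correct and follows essentially the same route as the paper: write the complement of $\Dcal$ as a countable union of closed sets, and verify closedness by combining weak$^*$ compactness of $\Prob([0,1]^2)$ with the preceding Lemma (that a weak$^*$ limit of joinings of $(S_n,T_n)$ is a joining of the limit pair). The only cosmetic difference is the indexing --- the paper uses the sets $C_n=\{(S,T):\diam J_{(S,T)}\ge 1/n\}$ for a fixed compatible metric on $\Prob([0,1]^2)$, whereas you use test functions $h_k$ and rational thresholds $\del$ --- but these are equivalent formulations of the same idea.
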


\begin{proof}

In the weak$^*$-topology the set of joinings corresponding to MPT $\times$ MPT form a closed
(hence compact) subset of the weak$^*$-compact metric space of probability measures on $[0,1] \times [0,1]$.
Fix a compatible metric $d$ on this space.
For $(S,T)$ in MPT $\times$ MPT let $J_{(S,T)}$ denote the closed set of all joinings of this pair.
This is always non-empty as the product measure $\mu \times \mu%
$ is  there. For $n =1,2,\dots$ set
$$
C_n = \{(S,T) : \diam J_{(S,T)} \geq 1/n\}.
$$
We claim that this is a closed set. Indeed, suppose $C_n \ni (S_k,T_k) \to (S,T)$.
For each $k$ there is a pair $\la_k , \la'_k \in J_{(S_k,T_k)}$ with $d(\la_k , \la'_k) \geq 1/n$.
We can assume that $\la_k \to \la$ and $\la'_k \to \la'$. Then $\la, \la' \in J_{(S,T)}$ and
$d(\la, \la') \geq 1/n$, whence $(S,T) \in C_n$.
Now it follows that the $G_\del$ set
$$
\Dcal  = \bigcap_{n =1}^\infty ({\rm MPT} \times {\rm MPT}) \setminus C_n
$$
coincides with the set of pairs $(S,T)$ with $J_{(S,T)} =\{\mu \times \mu\}$, namely
the set of disjoint pairs.
\end{proof}

\section{A characterization of the class $\Ecal^\perp$}

%
Let $(X, \mathcal{B}, \mu, T)$ be an invertible  probability measure preserving system.
Let
$$
\mu = \int_{\bar{X}} \mu_{\bar{x}} \, d {\bar{\mu}}(\bar{x})
$$
be the ergodic decomposition of $T$.
Assuming that the ergodic decomposition measure $\bar\mu$ has no atoms,
we can take the space of parameters $\bar{X}$ to be the unit interval $[0,1]$ with Lebesgue measure,
and let $X = \bar{X} \times Z$, with $Z$ also equals $[0,1]$.
Then, $T_{\bar{x}} = T \rest_{\{\bar{x}\} \times Z}$, or more precisely $T(\bar{x},z) = (\bar{x},T_{\bar{x}} z)$, and the systems
$(Z, \mu_{\bar{x}}, T_{\bar{x}})$ are the ergodic components of $T$.

\begin{prop}\label{prel}
Let $(X, \mathcal{B}, \mu, T)$ be an invertible  probability measure preserving system with ergodic decomposition
as above.
\begin{enumerate}
\item
If $T$ is disjoint from every ergodic $S$ in MPT then its ergodic decomposition measure  has no atoms.
\item
The set
$$
D = \{(\bar{x}_1, \bar{x}_2) \in \bar{X} \times \bar{X} : T_{\bar{x}_1} \perp T_{\bar{x}_2}\}
$$
is measurable.
\end{enumerate}
\end{prop}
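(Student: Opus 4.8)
The plan is to prove the two parts in order, using the characterization of disjointness via Markov operators and the $G_\delta$ structure established earlier.

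For part (1), I would argue by contradiction: suppose the ergodic decomposition measure $\bar\mu$ has an atom, say $\bar\mu(\{\bar x_0\}) = c > 0$. Then the ergodic component $(Z, \mu_{\bar x_0}, T_{\bar x_0})$ appears with positive weight inside $T$. Pick an ergodic system $S$ isomorphic to $T_{\bar x_0}$ (realized inside MPT). Now build a joining of $T$ and $S$ that is not the product measure: on the atomic part $\{\bar x_0\} \times Z$ use the graph joining coming from the isomorphism $T_{\bar x_0} \cong S$, and on the complement $\{\bar x \ne \bar x_0\}$ use the product joining with $S$; glue these together weighted by $\bar\mu$. The resulting measure $\lambda$ is $T\times S$-invariant, projects correctly, and is manifestly not $\mu \times \mu_S$ because its restriction to $(\{\bar x_0\}\times Z)\times [0,1]$ is supported on a graph rather than being a product. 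Hence $T$ is not disjoint from $S$, contradicting $T \in \Ecal^\perp$. (If $T_{\bar x_0}$ happens to be trivial as a measure-preserving system — i.e. the identity on a point or a purely atomic space — one instead uses that such a component cannot produce a nontrivial joining obstruction, but in fact the identity on a point joined with any ergodic $S$ only gives products, so one must be slightly careful; the cleanest route is: an atom of positive mass whose component is nontrivial immediately yields the self-joining obstruction, and if \emph{every} atom has a trivial component while there are infinitely many or a non-atomic remainder, reduce to those pieces.)

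For part (2), the goal is to show that $D = \{(\bar x_1, \bar x_2) : T_{\bar x_1} \perp T_{\bar x_2}\}$ is measurable in $\bar X \times \bar X$. The key input is Proposition \ref{Gdelta}: the set $\Dcal$ of disjoint pairs is Borel (indeed $G_\delta$) in MPT $\times$ MPT. So it suffices to exhibit a measurable map $\Phi : \bar X \to {\rm MPT}$ with $\Phi(\bar x)$ measurably isomorphic to $(Z,\mu_{\bar x},T_{\bar x})$ for $\bar\mu$-a.e. $\bar x$; then $D = (\Phi\times\Phi)^{-1}(\Dcal)$ up to a null set, and measurability follows. To produce such a $\Phi$, I would invoke a measurable selection / measurable isomorphism theorem: there is a standard fact (going back to the measurable version of the isomorphism theorem for Lebesgue spaces, or to Rokhlin's theory of measurable partitions) that the family of ergodic components of a fixed system can be realized as a Borel map from the parameter space into MPT. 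Concretely, the function $\bar x \mapsto$ (the measure $\mu_{\bar x}$ together with the action $T_{\bar x}$) is Borel in the appropriate sense, and composing with a Borel map that pushes $(Z,\mu_{\bar x})$ forward to $([0,1],{\rm Leb})$ (which exists Borel-measurably in $\bar x$ by a parametrized form of the isomorphism of Lebesgue spaces) gives $\Phi$.

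The main obstacle is the measurability in part (2): one must be careful that the disintegration $\bar x \mapsto \mu_{\bar x}$ and the transformations $T_{\bar x}$ assemble into a genuinely \emph{Borel} map into the Polish group MPT, not merely a pointwise-defined family. This is where one leans on the parametrized isomorphism theorem for standard probability spaces. Granting that, the rest is a pullback of a Borel set under a Borel map. An alternative that avoids the selection theorem is to describe $D$ directly: $(\bar x_1,\bar x_2)\in D$ iff the only $T_{\bar x_1}\times T_{\bar x_2}$-invariant measure on $Z\times Z$ with the right marginals is $\mu_{\bar x_1}\times\mu_{\bar x_2}$; one can phrase "is an extreme point / is the unique joining" using countably many conditions involving $L^2$ inner products of a fixed dense sequence of functions against Markov operators, and check each such condition depends measurably on $(\bar x_1,\bar x_2)$ via the measurable dependence of $(T_{\bar x_1},T_{\bar x_2})$ on the parameters — essentially re-running the proof of Proposition \ref{Gdelta} fiberwise. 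Either way the conclusion is that $D$ is measurable (in fact it will be a Borel set up to a $\bar\mu\times\bar\mu$-null set).
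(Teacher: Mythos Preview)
Your proposal is correct and follows essentially the same route as the paper. For part (1) the paper simply declares the claim ``clear'' while you supply the explicit graph-joining argument (and even flag the edge case of atoms with trivial one-point components, which the paper glosses over); for part (2) both you and the paper argue that $\bar{x} \mapsto T_{\bar{x}}$ is a measurable map into MPT and that $D$ is the preimage of the $G_\delta$ set $\Dcal$ under the induced map $\bar{X}\times\bar{X} \to \mathrm{MPT}\times\mathrm{MPT}$, so your argument is the paper's argument stated with more care about the parametrized isomorphism.
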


\begin{proof}
The claim (1) is clear.

(2) The map $\bar{x} \mapsto T_{\bar{x}}$ is measurable, its image
$\Tcal = \{T_{\bar{x}}\}_{\bar{x} \in \bar{X}}$ is analytic in MPT, and therefore so
is the product set $\Tcal \times \Tcal$.
By Proposition \ref{Gdelta} the set $\Dcal$ is a $G_\del$-set, hence measurable and
finally we conclude that so is the set
$D = \Dcal \cap (\Tcal \times \Tcal)$.
\end{proof}

\begin{thm}\label{main}
Let $(X, \mathcal{B}, \mu, T)$ be an invertible  probability measure preserving system.
\begin{enumerate}
\item
If $T $ is disjoint from every ergodic $S$ in MPT, and
$\mu = \int_{\bar{X}} \mu_{\bar{x}} \, d \bar\mu(\bar{x})$ is its ergodic decomposition as above, then
the set
$$
D = \{(\bar{x}_1, \bar{x}_2) \in \bar{X} \times \bar{X} : T_{\bar{x}_1} \perp T_{\bar{x}_2}\}
$$
has $\bar\mu \times \bar\mu$ measure $1$.
\item
Conversely, if the ergodic decomposition measure of $T$ has no atoms
and $(\bar\mu \times \bar\mu)(D) =1$, then $(X, \mathcal{B}, \mu, T)$ is disjoint from all
ergodic systems $(Y, \mathcal{C}, \eta, S)$.
\end{enumerate}
\end{thm}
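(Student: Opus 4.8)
\noindent\textit{Plan of proof.} The two implications are handled separately. Both rely on the elementary observation from the Introduction — that the identity on a non-atomic space is disjoint from every ergodic system — together with careful disintegration of joinings over invariant factors.

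For (1) I would argue by contraposition: assuming $(\bar\mu\times\bar\mu)(D)<1$, I produce an ergodic $S\in$ MPT with $T\not\perp S$. By Fubini there is a point $\bar x_2\in\bar X$ for which $A:=\{\bar x_1 : T_{\bar x_1}\not\perp T_{\bar x_2}\}$ has positive $\bar\mu$-measure. If $\mu_{\bar x_2}$ is non-atomic, take $S=T_{\bar x_2}$; if $T_{\bar x_2}$ happens to be a finite cyclic system, take instead $S=T_{\bar x_2}\times R_\alpha$ with $\alpha$ irrational chosen so that this product is ergodic. In either case $S$ is an ergodic element of MPT having $T_{\bar x_2}$ as a factor, so $T_{\bar x_1}\not\perp S$ for every $\bar x_1\in A$ (a non-product joining of $T_{\bar x_1}$ and $T_{\bar x_2}$ lifts to one of $T_{\bar x_1}$ and $S$). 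Using a standard measurable selection theorem, choose $\bar x_1\mapsto\lambda_{\bar x_1}$ measurably so that $\lambda_{\bar x_1}$ is a non-product joining of $T_{\bar x_1}$ and $S$ for $\bar x_1\in A$, and the product joining off $A$. Then $\rho:=\int_{\bar X}\lambda_{\bar x_1}\,d\bar\mu(\bar x_1)$ is a joining of $T$ and $S$: invariance is fibrewise because $T$ acts as the identity on the base $\bar X$, and the two marginals integrate to $\mu$ and to the measure of $S$. Since the disintegration of $\rho$ over $\bar X$ is non-product on the positive-measure set $A$, $\rho$ is not the product joining, contradicting $T\in\Ecal^\perp$.

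For (2) let $\rho$ be an arbitrary joining of $(X,\mathcal B,\mu,T)$ with an ergodic system $(Y,\mathcal C,\eta,S)$; the goal is $\rho=\mu\times\eta$. First project $\rho$ to $\bar X\times Y$: since $\bar X$ is the factor of $(X,\mu,T)$ given by the $T$-invariant sets, on which $T$ acts as the identity, and $\bar\mu$ is non-atomic, the elementary argument of the Introduction forces this projection to be $\bar\mu\times\eta$. Now the key step: disintegrate $\rho=\int_Y\rho^y\,d\eta(y)$ over the $Y$-coordinate and form the relatively independent self-joining $\hat\rho:=\int_Y\rho^y\times\rho^y\,d\eta(y)$, a self-joining of $(X,\mu,T)$ (invariance follows from $T_*\rho^y=\rho^{Sy}$ together with $S_*\eta=\eta$). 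By the previous step the $\bar X$-marginal of $\rho^y$ is $\bar\mu$ for a.e. $y$, so $\hat\rho$ projects onto $\bar\mu\times\bar\mu$ on $\bar X\times\bar X$. Disintegrating $\hat\rho$ over $\bar X\times\bar X$ (on which $T\times T$ acts as the identity), the a.e. fibre measure $\hat\rho_{(\bar x_1,\bar x_2)}$ is $T_{\bar x_1}\times T_{\bar x_2}$-invariant; and since $\mu_{\bar x_1}$ is ergodic and equals the $\bar\mu$-average over $\bar x_2$ of the ($T_{\bar x_1}$-invariant) first marginals of these fibre measures, those marginals equal $\mu_{\bar x_1}$ for a.e. $\bar x_2$, and symmetrically for the second. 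Thus $\hat\rho_{(\bar x_1,\bar x_2)}$ is a genuine joining of the ergodic systems $T_{\bar x_1}$ and $T_{\bar x_2}$ for a.e. pair. Since $(\bar\mu\times\bar\mu)(D)=1$, we get $\hat\rho_{(\bar x_1,\bar x_2)}=\mu_{\bar x_1}\times\mu_{\bar x_2}$ a.e., hence $\hat\rho=\mu\times\mu$. Finally, $\hat\rho=\mu\times\mu$ says $\int_Y\bigl(\int_X f\,d\rho^y\bigr)^2\,d\eta(y)=\bigl(\int_X f\,d\mu\bigr)^2$ for every bounded measurable $f$, while $\int_Y\bigl(\int_X f\,d\rho^y\bigr)\,d\eta(y)=\int_X f\,d\mu$; the variance of $y\mapsto\int f\,d\rho^y$ vanishes, so $\int f\,d\rho^y=\int f\,d\mu$ for a.e. $y$, and running $f$ over a countable separating family yields $\rho^y=\mu$ for a.e. $y$, i.e. $\rho=\mu\times\eta$.

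The main obstacle is the measure-theoretic bookkeeping around disintegrations: one must check that disintegrating an invariant joining over an invariant factor produces, on a.e. fibre, an honest joining of the corresponding ergodic components with the correct marginals (used twice in (2)), and one must carry out the measurable selection of the fibrewise joinings $\lambda_{\bar x_1}$ in (1). The genuinely new ingredient is the passage to $\hat\rho$: the relatively independent self-joining over the external ergodic system converts the hypothesis on pairs of ergodic components into a statement about a single self-joining of $T$, which is then collapsed to the product by the disjointness of the components.
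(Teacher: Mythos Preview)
Your argument for (1) is essentially the paper's: both fix an $\bar x_0$ (your $\bar x_2$) whose non-disjointness section has positive $\bar\mu$-measure, take $S=T_{\bar x_0}$, invoke a measurable selection theorem (the paper cites Kuratowski--Ryll-Nardzewski) to pick a non-product joining over each fibre in that section, and integrate to get a nontrivial joining of $T$ with the ergodic $S$. Your extra clause covering the possibility that $T_{\bar x_2}$ is a finite periodic system is a sensible technical precaution that the paper does not make explicit.

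Your proof of (2), however, takes a genuinely different route. The paper argues by contradiction and cardinality: assuming some ergodic $S$ with $T\not\perp S$, one finds a positive-measure set $C\subset\bar X$ on which $T_{\bar x}\not\perp S$, then uses a Ramsey-type result (Laczkovich's theorem, or transfinite induction) to extract an \emph{uncountable} $A\subset C$ with $A\times A\subset D$, and finally proves a lemma --- via a three-fold joining --- showing that the pairwise-disjoint family $\{T_{\bar x}:\bar x\in A\}$, each non-disjoint from $S$, yields an uncountable orthonormal set in the separable space $L^2(Y,\eta)$. Your approach sidesteps both the Ramsey extraction and the separability contradiction: forming the relatively independent self-joining $\hat\rho=\int_Y\rho^y\times\rho^y\,d\eta$ turns the pairwise-disjointness hypothesis on $D$ directly into $\hat\rho=\mu\times\mu$, after which the variance identity forces $\rho^y=\mu$ a.e. The paper's method has the virtue of isolating a reusable orthogonality lemma; yours stays entirely within standard disintegration, is shorter, and transfers to the relative setting of the paper's Section~4 without modification.
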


\begin{proof}
(1)  We will show that $D$ has $\bar\mu \times \bar\mu$ measure $1$.
 Suppose to the contrary that
 the symmetric set $P = \{(\bar{x}_1, \bar{x}_2) : T_{\bar{x}_1} \not\perp T_{\bar{x}_2}\}$ has positive
 $\bar\mu \times \bar\mu$-measure.
 By Fubini's theorem there is a point $\bar{x}_0 \in \bar{X}$ such that the set
 $K : = P_{\bar{x}_0} = \{\bar{x} \in \bar{X} : (\bar{x}, \bar{x}_0) \in P\}$ has positive $\bar\mu$-measure.
 Denote $(Y, \Ccal,\eta, S) := (Z, T_{\bar{x}_0}, \mu_{\bar{x}_0})$.
 We then have $T_{\bar{x}} \not \perp S$ for every $\bar{x} \in K$.


 For each $(\bar{x}_1, \bar{x}_2)  \in P$ let $J_{(\bar{x}_1, \bar{x}_2)}$ be
 the collection of non-product ergodic joinings of $T_{\bar{x}_1}$ and $T_{\bar{x}_2}$.
 By a theorem of Kuratowski and Ryll-Nardzewskii \cite{K-R} (see also \cite[Theorem 5.1]{P-72})
  there is a Borel cross-section
 $j : K \to \bigcup_{P} J_{(\bar{x}, \bar{x}_0)}, \ j(\bar{x}) \in J_{(\bar{x}, \bar{x}_0)}$,
 and then $\int_K  j(\bar{x}) \, d\bar\mu(\bar{x})$ is a non-trivial
 joining of $T$ and $S$, contradicting our assumption that $T$ is disjoint from all the ergodic systems.
 This concludes the proof of claim (1).

 \br

 We will next prove claim (2). So, we assume that  $(\bar\mu \times \bar\mu)(D) =1$ and
 we want to show that then $(X, \mathcal{B}, \mu, T)$ is disjoint from all
ergodic systems $(Y, \mathcal{C}, \eta, S)$.
Fix an ergodic system $(Y, \Ccal,\eta, S)$.
Suppose to the contrary that $T \not \perp S$. If for $\bar\mu$-a.e. $\bar{x}$
$T_{\bar{x}} \perp S $ then clearly $T \perp S$.
Thus there is a set $C \subset \bar{X}$ of positive $\bar\mu$-measure
such that $T_{\bar{x}} \not \perp S$ for every $\bar{x} \in C$.


\br

We have $(\bar\mu \times \bar\mu)((C \times C) \cap D) = \bar\mu(C)^2$; i.e.
the set $(C \times C) \cap D$ has full measure in $C\times C$.
Using transfinite induction one can show that there exists a subset $A \subset C$ of cardinality $\aleph _1$ such that
$A \times A \subset D$,
 alternatively, use \cite{L-02} to get a Cantor set $A$ with
$A \times A \subset D$ (hence of cardinality $2^{\aleph_0}$).
We then have:
\begin{enumerate}
\item[(i)]
 $\bar{x} \in A \imp T_{\bar{x}} \not \perp S $, and
 \item[(ii)]
 $T_{\bar{x}_1}  \perp T_{\bar{x}_2}$  for distinct $\bar{x}_1, \bar{x}_2 \in A$,
\end{enumerate}

In the sequel we will need the following lemma:

\begin{lem}
Let $(Y, \Ccal,\eta, S)$ and $(Z_i, \mathcal{F}_i, \theta_i, R_i), \ i \in I$ for some index set $I$,
be ergodic systems such that $S \not\perp R_i$ for every $i \in I$, and such that $R_i \perp R_j$
for every distinct $i, j \in I$.
Then there are  functions $g_i \in L^2(Y, \eta)$ such that the family $\{g_i : i \in I\}$ is orthogonal.
\end{lem}

\begin{proof}
Fix for each $i \in I$ a  joining $\la^{(i)}$ of  $(Y, \Ccal,\eta, S)$ and $(Z_i, \mathcal{F}_i, \theta_i, R_i)$
which is not the independent joining.
We write $\la^{(i)} = \int \del_y \times \la_y^{(i)} \, d \eta(y)$ and choose a function $h_i \in L^2(Z_i,\theta_i)$
with zero integral
such that the function
$$
g_i(y) = \int_{Z_i} h_i(z) \, d \la_y^{(i)}(z)
$$
is not constant. We then have
\begin{gather*}
\int_Y g_i(y) \, d\eta(y) = \int _Y \int_{Z_i} h_i(z) \, d \la_y^{(i)}(z)  \, d\eta(y) =\\
 =  \int_{Z_i} h_i(z) \, d\theta_i(z) = 0.
\end{gather*}
 Normalizing if necessary, we can assume that $g_i$ has $L^2$-norm $1$,

Now given distinct $i,j \in I$, in order to compute the inner product $\langle g_i, g_j \rangle$ in $L^2(Y,\eta)$,
consider the three-fold joining
$$
\Lambda = \int_{Y \times Z_i \times Z_j} \del_y \times \la_y^{(i)} \times \la_y^{(j)} \  d \eta(y).
$$
Its projection to the product space $Z_i \times Z_j$ is a joining of $R_i$ and $R_j$, hence must be the independet joining,
as these two transformations are disjoint.
We then have
\begin{align*}
\int_Y g_i(y) \ol{g_j(y)} d\, \eta(y)   & =
\int_Y \left( \int_{Z_i} h_i(z) \, d \la_y^{(i)}(z) \right)  \cdot
\left( \int_{Z_j} \ol{h_j(z')}\, d \la_y^{(i)}(z') \right)\, d\eta(y)\\
& =
\int_{Y \times Z_i \times Z_j}  g_i(y) \ol{g_j(y)} \ d \Lambda(y,z,z') \\
&   = \int_Y  g_i(y) \, d\eta(y) \cdot   \int_Y \ol{g_j(z)} \, d\eta(y) =0.
\end{align*}
\end{proof}

Applying this lemma with $I = A$ and $R_i = T_{\bar{x}}$ we obtain an orthonormal non-countable set
in the separable Hilbert space $L^2(Y, \eta)$, which is impossible.

This finishes the proof of claim (2).

\end{proof}

\section{The case of a general countable group}

Let $G$ be a countable  group.
Let $\mathbb{A(G)}$ denote the Polish space of probability measure preserving  $G$-actions.
The group MPT acts on $\A(G)$ by conjugation (see \cite{K-10}).
We let $\Ecal(G)$ denote the collection of ergodic systems in $\A(G)$. It is well known and
easy to see that this is a $G_\del$ subset of $\A(G)$.


As already mentioned in the introduction the notions of joinings, disjointness, etc. which were
defined in the preceding section for elements of MPT, naturally extend to the context of actions.
E.g. for a pair of actions $(a,b)$ in $\A(G) \times \A(G)$ a {\em joining} is still a probability measure $\la$ on $[0,1] \times [0,1]$
that projects onto Lebesgue measure $\mu$ on both coordinates
and is invariant under $a(g) \times b(g)$ for every $g \in G$.
Again, a joining $\la$,
written as
$$
\la = \int \del_x \times \la_x,
$$
 defines a Markov operator $M = M_\la : L^2([0,1], \mu) \to L^2([0,1], \mu)$ by
$$
Mf(x) = \int_0^1 f(y) \, d \la_x(y),
$$
and we then have $Ma(g) = b(g)M$ for every $g \in G$.
Also, conversely, every Markov operator $M$ which satisfies these
equalities determines a joining $\la$ of $a$ and $b$.

Moreover, examining the proofs of  Propositions \ref{Gdelta}, \ref{prel} and Theorem \ref{main}, we see that
(almost verbatim) they make good sense if we replace $\Z$ by an arbitrary countable group $G$.
We restate these results as follows:

\begin{prop}\label{GdeltaA}
In $\A(G) \times \A(G)$ the set $\Dcal$ of pairs $(a,b)$ such that $a$ is disjoint from $b$ is $G_\del$.
\end{prop}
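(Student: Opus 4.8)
The plan is to reproduce, essentially verbatim, the proof of Proposition~\ref{Gdelta}, with a single transformation replaced throughout by a full $G$-action. Recall that a joining of a pair $(a,b)\in\A(G)\times\A(G)$ is a probability measure on $[0,1]\times[0,1]$ whose two marginals are both $\mu$ and which is invariant under $a(g)\times b(g)$ for every $g\in G$. First I would fix a compatible metric $d$ on the weak$^*$-compact metric space of probability measures on $[0,1]\times[0,1]$. For $(a,b)\in\A(G)\times\A(G)$ let $J_{(a,b)}$ be the set of its joinings; it is weak$^*$-closed, being the intersection of the two (closed) marginal conditions with the (closed) $a(g)\times b(g)$-invariance conditions over all $g\in G$, hence compact, and it is non-empty since $\mu\times\mu\in J_{(a,b)}$.

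The one genuine ingredient is the $G$-action analogue of the Lemma preceding Proposition~\ref{Gdelta}: if $(a_k,b_k)\to(a,b)$ in $\A(G)\times\A(G)$, if $\la_k\in J_{(a_k,b_k)}$, and if $\la_k\to\la$ weak$^*$, then $\la\in J_{(a,b)}$. The marginal conditions pass to the limit at once. For the invariance, write $M_k=M_{\la_k}$, $M=M_\la$ for the associated Markov operators; then $M_k\to M$ in the weak operator topology and $M_kU_{a_k(g)}=U_{b_k(g)}M_k$ for each $g$. Since convergence in $\A(G)$ is coordinatewise convergence in MPT and $T\mapsto U_T$ is a homeomorphism onto its image in the strong operator topology, for each fixed $g$ we have $U_{a_k(g)}\to U_{a(g)}$ and $U_{b_k(g)}\to U_{b(g)}$ strongly; together with $\|M_k\|\le 1$ this lets one pass to the limit, one $g$ at a time, in exactly the estimate displayed in the proof of that Lemma, giving $MU_{a(g)}=U_{b(g)}M$ for all $g$, i.e. $\la\in J_{(a,b)}$.

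Granting this, the rest is formal and identical to the proof of Proposition~\ref{Gdelta}. Set $C_n=\{(a,b):\diam J_{(a,b)}\ge 1/n\}$. If $C_n\ni(a_k,b_k)\to(a,b)$, choose $\la_k,\la'_k\in J_{(a_k,b_k)}$ with $d(\la_k,\la'_k)\ge 1/n$, pass to weak$^*$-convergent subsequences $\la_k\to\la$, $\la'_k\to\la'$, and apply the lemma to get $\la,\la'\in J_{(a,b)}$ with $d(\la,\la')\ge 1/n$; thus $(a,b)\in C_n$ and $C_n$ is closed. Hence $\Dcal=\bigcap_{n=1}^\infty(\A(G)\times\A(G))\setminus C_n$ is $G_\del$, and $(a,b)\in\Dcal$ iff $\diam J_{(a,b)}=0$, i.e. iff $J_{(a,b)}=\{\mu\times\mu\}$, which is exactly disjointness of $a$ and $b$.

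The only point needing care — and it is a mild one — is this last limiting step for $G$-invariance: one must know that convergence in the Polish topology of $\A(G)$ forces, for each fixed $g$, strong-operator convergence of the Koopman operators $U_{a_k(g)}\to U_{a(g)}$. This is immediate from the definition of the topology on $\A(G)$, so the single-transformation estimate applies group-element by group-element and the whole argument of Proposition~\ref{Gdelta} transfers without change.
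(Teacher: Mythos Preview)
Your proposal is correct and is exactly the approach the paper takes: the paper simply remarks that the proof of Proposition~\ref{Gdelta} (together with its preceding Lemma) goes through almost verbatim when $\Z$ is replaced by a countable group $G$, and restates the result without a separate proof. Your write-up spells out precisely this transfer, including the one point that convergence in $\A(G)$ gives, for each fixed $g$, strong-operator convergence of the Koopman operators, which is all that is needed to run the Lemma and the $C_n$ argument group-element by group-element.
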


\begin{prop}\label{prelA}
Let $\Xb = (X, \mathcal{B}, \mu, \Tb)$, with $\Tb = \{T_g\}_{g \in G}$,
be an element of $\A(G)$ with ergodic decomposition $\mu = \int_{\bar{X}} \mu_{\bar{x}} \, d \bar\mu(\bar{x})$.
\begin{enumerate}
\item
If $\Xb$ is disjoint from every ergodic $a$ in $\Ecal(G)$ then its ergodic decomposition measure  has no atoms.
\item
The set
$$
D = \{(\bar{x}_1, \bar{x}_2) \in \bar{X} \times \bar{X} : {\Tb}_{\bar{x}_1} \perp {\Tb}_{\bar{x}_2}\}
$$
is measurable.
\end{enumerate}
\end{prop}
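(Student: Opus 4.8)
The plan is to transcribe the proof of Proposition~\ref{prel} almost verbatim, substituting $\A(G)$ for MPT and Proposition~\ref{GdeltaA} for Proposition~\ref{Gdelta}; only two points genuinely involve the group $G$ and deserve a comment.

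For part~(1) I would argue by contradiction. Suppose $\bar\mu$ has an atom at some $\bar x_0$, say $\bar\mu(\{\bar x_0\})=c>0$, and write $X=\bar X\times Z$ as in the set-up preceding Proposition~\ref{prel}. Then $E=\{\bar x_0\}\times Z$ is a $G$-invariant set of measure $c$, and since $\mu$ is non-atomic while $c>0$, the component measure $\mu_{\bar x_0}$ is non-atomic, so $\Ycal:=(Z,\mu_{\bar x_0},\Tb_{\bar x_0})$ is (isomorphic to) a non-trivial element of $\Ecal(G)$. Now the measure $\lambda=\int_X\delta_x\times\lambda_x\,d\mu(x)$ on $X\times Z$, with $\lambda_{(\bar x_0,z)}=\delta_z$ for $z\in Z$ and $\lambda_x=\mu_{\bar x_0}$ for $x\notin E$, is a joining of $\Xb$ and $\Ycal$: it is invariant under $T_g\times(\Tb_{\bar x_0})_g$ for every $g\in G$ (over $E$ it is the graph of the identification $E\cong Z$, off $E$ it is a product of invariant measures) and its marginals are $\mu$ and $\mu_{\bar x_0}$. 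Since $\mu_{\bar x_0}$ is non-atomic, $\lambda$ disagrees with $\mu\times\mu_{\bar x_0}$ over $E\times Z$, so $\Xb\not\perp\Ycal$, contradicting the hypothesis. (This is just the familiar fact that no system is disjoint from a non-trivial subsystem of itself.)

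For part~(2) the proof of Proposition~\ref{prel}(2) goes through word for word. The ergodic decomposition of a p.m.p.\ action of a countable group $G$ yields a measurable map $\bar x\mapsto\Tb_{\bar x}$ from $\bar X$ into $\A(G)$ (coordinatewise in $g$ this is the classical measurability of $\bar x\mapsto(T_g)_{\bar x}$, and $\A(G)$ carries the topology under which each evaluation $a\mapsto a(g)$ is continuous), so its image $\Tcal$, hence $\Tcal\times\Tcal$, is analytic. By Proposition~\ref{GdeltaA} the set $\Dcal$ of disjoint pairs is $G_\del$, in particular Borel, and $D$ is the preimage of $\Dcal\cap(\Tcal\times\Tcal)$ under the measurable map $(\bar x_1,\bar x_2)\mapsto(\Tb_{\bar x_1},\Tb_{\bar x_2})$; hence $D$ is $(\bar\mu\times\bar\mu)$-measurable. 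The only non-routine step is part~(1), and even there the content is the elementary graph-joining above, so I do not anticipate a real obstacle beyond checking that the standard measurability facts for ergodic decompositions hold for actions of countable groups, which they do.
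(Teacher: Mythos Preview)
Your proposal is correct and follows exactly the route the paper takes: the paper explicitly says that the proof of Proposition~\ref{prel} goes through ``almost verbatim'' with $\A(G)$ in place of MPT and Proposition~\ref{GdeltaA} in place of Proposition~\ref{Gdelta}, and your part~(2) reproduces that argument (with the minor clarification that $D$ is the \emph{preimage} of $\Dcal$ under $(\bar x_1,\bar x_2)\mapsto(\Tb_{\bar x_1},\Tb_{\bar x_2})$, which the paper glosses over). For part~(1) the paper simply writes ``The claim (1) is clear''; your graph-joining argument is precisely the standard justification one has in mind, so you have merely spelled out what the authors leave implicit.
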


\begin{thm}\label{mainA}
Let $\Xb = (X, \mathcal{B}, \mu, \Tb)$, with $\Rb = \{R_g\}_{g \in G}$,
be an element of $\A(G)$.
\begin{enumerate}
\item
If $\Tb$  is is disjoint from every ergodic $\Sb$ in $\Ecal(G)$, and
$\mu = \int_{\bar{X}} \mu_{\bar{x}} \, d \bar \mu(\bar{x})$ is its ergodic decomposition, then
the set
$$
D = \{(\bar{x}_1, \bar{x}_2) \in \bar{X} \times \bar{X} : \Tb_{\bar{x}_1} \perp \Tb_{\bar{x}_2}\}
$$
has $\bar\mu \times \bar\mu$ measure $1$.
\item
Conversely, if $(\bar\mu \times \bar\mu)(D) =1$, then $\Xb$
 is disjoint from all
ergodic systems $\Yb = (Y, \mathcal{C}, \eta, \Sb) \in \A(G)$.
\end{enumerate}
\end{thm}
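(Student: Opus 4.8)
The plan is to run the proof of Theorem~\ref{main} essentially verbatim, replacing the single transformation $T$ by the tuple $\Tb=\{T_g\}_{g\in G}$ throughout. Nothing in that argument uses a property of $\Z$ beyond countability, and countability is exactly what makes $\A(G)$ Polish, $\Ecal(G)$ a $G_\del$-subset of it (Proposition~\ref{GdeltaA}), the set of joinings compact metrizable, $L^2([0,1],\mu)$ separable, and the ergodic decomposition $\mu=\int_{\bar X}\mu_{\bar x}\,d\bar\mu(\bar x)$ available with $\bar x\mapsto\Tb_{\bar x}$ measurable. As before, when $\bar\mu$ is non-atomic we write $X=\bar X\times Z$ with $\bar X=Z=[0,1]$ and $\Tb_{\bar x}=\Tb\rest_{\{\bar x\}\times Z}$.

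For~(1) I would argue by contradiction. Supposing $D$ does not have full $\bar\mu\times\bar\mu$-measure, the symmetric set $P=\{(\bar x_1,\bar x_2):\Tb_{\bar x_1}\not\perp\Tb_{\bar x_2}\}$ has positive measure, so by Fubini one fixes $\bar x_0$ with $K:=P_{\bar x_0}$ of positive $\bar\mu$-measure and puts $\Sb:=\Tb_{\bar x_0}$ acting on $(Z,\eta)$, $\eta:=\mu_{\bar x_0}$. For each $\bar x\in K$ we have $\Tb_{\bar x}\not\perp\Sb$, and ergodic-decomposing a non-product joining of $\Tb_{\bar x}$ with $\Sb$ produces a non-product \emph{ergodic} joining, so the collection $J_{(\bar x,\bar x_0)}$ of such joinings is non-empty. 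By the Kuratowski--Ryll-Nardzewski selection theorem~\cite{K-R} there is a Borel map $\bar x\mapsto j(\bar x)\in J_{(\bar x,\bar x_0)}$ on $K$; set $\rho_{\bar x}:=j(\bar x)$ for $\bar x\in K$ and $\rho_{\bar x}:=\mu_{\bar x}\times\eta$ otherwise, and let $\rho:=\int_{\bar X}\del_{\bar x}\times\rho_{\bar x}\,d\bar\mu(\bar x)$. Since each $\rho_{\bar x}$ is a joining of $\Tb_{\bar x}$ with $\Sb$, $\rho$ is a joining of $\Tb$ with $\Sb$; but its disintegration over the factor $\bar X$ of $\Xb$ is $(\rho_{\bar x})_{\bar x}$, which disagrees with the disintegration $(\mu_{\bar x}\times\eta)_{\bar x}$ of $\mu\times\eta$ on the set $K$ of positive measure, so by uniqueness of disintegrations $\rho\neq\mu\times\eta$. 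This contradicts the assumption that $\Tb$ is disjoint from all ergodic systems; it also yields Proposition~\ref{prelA}(1), since a non-trivial system is never disjoint from itself.

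For~(2) I would assume $(\bar\mu\times\bar\mu)(D)=1$ — which by the last remark forces $\bar\mu$ to have no atoms — and suppose, towards a contradiction, that $\Xb\not\perp\Yb$ for some ergodic $\Yb=(Y,\Ccal,\eta,\Sb)\in\A(G)$. Because $\eta$ is ergodic, any joining of $\Tb$ with $\Sb$ disintegrates over $\bar X$ into joinings of $\Tb_{\bar x}$ with $\Sb$, so if $\Tb_{\bar x}\perp\Sb$ held for $\bar\mu$-a.e.\ $\bar x$ we would get $\Tb\perp\Sb$; hence there is $C\subset\bar X$ with $\bar\mu(C)>0$ and $\Tb_{\bar x}\not\perp\Sb$ for every $\bar x\in C$. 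As $(\bar\mu\times\bar\mu)(D)=1$, the set $(C\times C)\cap D$ is conull in $C\times C$, so by transfinite induction (or, via \cite{L-02}, by taking a Cantor set) one obtains an uncountable $A\subset C$ with $A\times A\subset D$; thus $\Tb_{\bar x}\not\perp\Sb$ for every $\bar x\in A$ while $\Tb_{\bar x_1}\perp\Tb_{\bar x_2}$ for distinct $\bar x_1,\bar x_2\in A$. Applying the orthogonality Lemma from the proof of Theorem~\ref{main} — whose proof uses only disintegrations and the three-fold joining $\Lambda=\int\del_y\times\la_y^{(i)}\times\la_y^{(j)}\,d\eta(y)$ together with $R_i\perp R_j$, and so is unchanged for $G$-actions — with $I=A$ and $R_i=\Tb_{\bar x}$, we get an uncountable orthonormal family in the separable space $L^2(Y,\eta)$, which is absurd. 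Hence $\Xb\perp\Yb$.

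The step to pin down carefully — and the real engine of both directions — is the assertion used above that \emph{every} joining of $\Tb$ with an ergodic $(Y,\eta,\Sb)$ disintegrates over the factor $\bar X$ of $\Xb$ into genuine joinings of the components $\Tb_{\bar x}$ with $(Y,\eta,\Sb)$: the $Z$-marginal of the fibre $\rho_{\bar x}$ is forced to be $\mu_{\bar x}$, and, since $\eta$ is ergodic and an average of invariant measures that equals an ergodic measure is that measure almost everywhere, the $Y$-marginal of $\rho_{\bar x}$ is $\eta$ for $\bar\mu$-a.e.\ $\bar x$. Granting this and the uniqueness of disintegrations, the rest is bookkeeping: no use is made of amenability or of any arithmetic of $\Z$, only of the countability of $G$. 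Consequently the proofs of Propositions~\ref{GdeltaA},~\ref{prelA} and of Theorem~\ref{mainA} are word-for-word those of the previous two sections, with $T$ replaced by $\Tb=\{T_g\}_{g\in G}$.
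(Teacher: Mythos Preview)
Your proposal is correct and follows precisely the paper's approach: the paper itself does not give a separate proof of Theorem~\ref{mainA} but simply observes that the proofs of Propositions~\ref{Gdelta}, \ref{prel} and Theorem~\ref{main} go through almost verbatim with $T$ replaced by $\Tb=\{T_g\}_{g\in G}$, which is exactly what you spell out. Your additional care in justifying that a joining of $\Tb$ with an ergodic $\Sb$ disintegrates over $\bar X$ into genuine joinings of $\Tb_{\bar x}$ with $\Sb$, and in deriving the non-atomicity of $\bar\mu$ from $(\bar\mu\times\bar\mu)(D)=1$, only makes explicit what the paper takes for granted.
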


%
%

\br

\section{The relative case}

In this short section we observe that,
with the obvious suitable modifications,
all our arguments so far go through when we consider the following
relative setup. Fix an ergodic dynamical system $\Zb = (Z, \mathcal{Z}, \ka, \Rb) \in \A(G)$, with $\Rb = \{R_g\}_{g \in G}$.
We now assume that all our systems admit $\Zb$ as a factor and denote this collection
of actions by $\A_{\Zb}(G)$.
Observe that when $\pi : \Xb \to \Zb$
is a factor map and $\mu = \int_{\bar{X}} \mu_{\bar{x}} \, d \bar\mu(\bar{x})$ is the ergodic decomposition
of $\mu$, then for $\bar\mu$-a.e. $\bar{x}$ we have $\pi_*(\mu_{\bar x}) = \ka$, so that
$\Xb \in \A_{\Zb}(G)$ implies $\Tb_{\bar{x}} \in  \A_{\Zb}(G)$ for $\bar{\mu}$-a.e. $\bar{x}$.

Recall that elements $\Xb = (X, \mathcal{B}, \mu, \Tb)$ and $\Yb = (Y, \mathcal{C}, \eta, \Sb)$ of $\A_{\Zb}(G)$
are {\em relatively disjoint over their common factor $\Zb$} if the relatively independent joining
$\mu \underset{\ka} \times \eta$ is the unique joining of $\Xb$ and $\Yb$ over $\Zb$.
We denote this relation by $\Xb \underset{\Zb} \perp \Yb$; see
e.g. \cite[Chapter 6, section 6]{Gl} for more details.

\br

\begin{thm}\label{mainR}
Let $\Xb = (X, \mathcal{B}, \mu, \Tb)$, with $\Tb = \{T_g\}_{g \in G}$,
be an element of $\A_{\Zb}(G)$ such that the map $\pi : \Xb \to \Zb$ is
$\ka$-a.e. infinite to one.
\begin{enumerate}
\item
If $\Tb$  is is disjoint over $\Zb$ from every ergodic $\Sb$ in $\A_{\Zb}(G)$, and
$\mu = \int_{\bar{X}} \mu_{\bar{x}} \, d \bar \mu(\bar{x})$ is its ergodic decomposition, then
the set
$$
D = \{(\bar{x}_1, \bar{x}_2) \in \bar{X} \times \bar{X} : \Tb_{\bar{x}_1} \underset{\Zb} \perp \Tb_{\bar{x}_2}\}
$$
has $\bar\mu \times \bar\mu$ measure $1$.
\item
Conversely, if $(\bar\mu \times \bar\mu)(D) =1$, then $\Xb$ is disjoint over $\Zb$ from all
ergodic systems $\Yb = (Y, \mathcal{C}, \eta, \Sb) \in \A_{\Zb}(G)$.
\end{enumerate}
\end{thm}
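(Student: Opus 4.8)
The plan is to reduce everything to the absolute case Theorem \ref{mainA} by passing to the relative picture over $\Zb$, and to carry out the two implications in parallel with the proofs of Theorem \ref{main}(1) and (2), replacing ordinary joinings by joinings over $\Zb$ and the independent joining by the relatively independent joining $\mu\underset\ka\times\eta$. First I would record the observation already made in the section: if $\mu=\int_{\bar X}\mu_{\bar x}\,d\bar\mu(\bar x)$ is the ergodic decomposition, then for $\bar\mu$-a.e.\ $\bar x$ one has $\pi_*(\mu_{\bar x})=\ka$, so each ergodic component $\Tb_{\bar x}$ is itself an element of $\A_{\Zb}(G)$ and the relation $\Tb_{\bar x_1}\underset\Zb\perp\Tb_{\bar x_2}$ makes sense. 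I would also note that a joining of $\Tb$ and $\Sb$ over $\Zb$ disintegrates over $\bar\mu$ into joinings of the ergodic components $\Tb_{\bar x}$ with $\Sb$ over $\Zb$, exactly as in the absolute case, and that $\pi$ being $\ka$-a.e.\ infinite-to-one guarantees the relevant fibre measures are nonatomic, which is what plays the role of the ``no atoms in the ergodic decomposition'' hypothesis in part (2).

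For part (1): assume $\Tb$ is disjoint over $\Zb$ from every ergodic $\Sb\in\A_{\Zb}(G)$ and suppose for contradiction that the symmetric set $P=\{(\bar x_1,\bar x_2):\Tb_{\bar x_1}\not\underset\Zb\perp\Tb_{\bar x_2}\}$ has positive $\bar\mu\times\bar\mu$-measure. By Fubini pick $\bar x_0$ with $K:=P_{\bar x_0}$ of positive $\bar\mu$-measure, set $\Sb:=\Tb_{\bar x_0}$ (an ergodic element of $\A_{\Zb}(G)$), so $\Tb_{\bar x}\not\underset\Zb\perp\Sb$ for $\bar x\in K$. For each such $\bar x$ the collection $J^{\Zb}_{(\bar x,\bar x_0)}$ of ergodic joinings of $\Tb_{\bar x}$ and $\Sb$ over $\Zb$ that are \emph{not} the relatively independent joining is nonempty; by the Kuratowski--Ryll-Nardzewski selection theorem there is a Borel section $\bar x\mapsto j(\bar x)\in J^{\Zb}_{(\bar x,\bar x_0)}$, and $\int_K j(\bar x)\,d\bar\mu(\bar x)$ is a joining of $\Tb$ and $\Sb$ over $\Zb$ which is not the relatively independent one, contradicting the hypothesis. (One must check that the measurability of the relevant sets, i.e.\ the analogue of Proposition \ref{prel}(2), survives in the relative setting; this is routine since ``joining over $\Zb$'' is a closed condition on the space of measures, so the set of relatively disjoint pairs is again $G_\delta$.)

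For part (2): assume $(\bar\mu\times\bar\mu)(D)=1$, fix an ergodic $\Yb=(Y,\Ccal,\eta,\Sb)\in\A_{\Zb}(G)$, and suppose $\Tb\not\underset\Zb\perp\Sb$. As in Theorem \ref{main}(2) there is a set $C\subset\bar X$ of positive $\bar\mu$-measure with $\Tb_{\bar x}\not\underset\Zb\perp\Sb$ for all $\bar x\in C$, and $(C\times C)\cap D$ has full measure in $C\times C$; extract (by transfinite induction, or via \cite{L-02}) an uncountable $A\subset C$ with $A\times A\subset D$, so that $\Tb_{\bar x}\not\underset\Zb\perp\Sb$ for $\bar x\in A$ while $\Tb_{\bar x_1}\underset\Zb\perp\Tb_{\bar x_2}$ for distinct $\bar x_1,\bar x_2\in A$. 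Now I would prove the \emph{relative} version of the Lemma: given ergodic systems $\Sb$ and $R_i$ over $\Zb$ with $\Sb\not\underset\Zb\perp R_i$ and $R_i\underset\Zb\perp R_j$ for $i\ne j$, there is an orthogonal family $\{g_i\}\subset L^2(Y,\eta)$. The construction is the same: fix for each $i$ a joining $\la^{(i)}$ of $\Sb$ and $R_i$ over $\Zb$ which is not the relatively independent one, write $\la^{(i)}=\int\del_y\times\la^{(i)}_y\,d\eta(y)$, pick $h_i\in L^2(Z_i,\theta_i)$ with $\E(h_i\mid\Zb)=0$ such that $g_i(y)=\int h_i\,d\la^{(i)}_y$ is nonconstant (possible precisely because $\la^{(i)}$ is not relatively independent over $\Zb$), and then the three-fold relatively independent joining over $\Zb$, $\Lambda=\int \del_y\times\la^{(i)}_y\times\la^{(j)}_y\,d\eta(y)$, projects on $Z_i\times Z_j$ to a joining of $R_i$ and $R_j$ over $\Zb$, which is relatively independent by disjointness over $\Zb$; the same computation then gives $\langle g_i,g_j\rangle=0$. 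Applying this with $I=A$ produces an uncountable orthogonal subset of the separable space $L^2(Y,\eta)$, a contradiction.

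The main obstacle I anticipate is not any single step but making sure the conditioning-over-$\Zb$ bookkeeping in the Lemma is correct: one needs $h_i$ with \emph{relative} mean zero (so that $g_i$ has relative mean zero, hence is orthogonal to the copy of $L^2(\Zb)$ inside $L^2(Y,\eta)$), and one must verify that $\la^{(i)}$ failing to be the relatively independent joining really does allow such an $h_i$ with $g_i$ nonconstant — this is where the hypothesis that $\pi:\Xb\to\Zb$ is $\ka$-a.e.\ infinite-to-one is used, to guarantee the fibres are rich enough that the relative-mean-zero part of $L^2$ is nontrivial and that two relatively independent copies over $\Zb$ are genuinely ``independent beyond $\Zb$''. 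Everything else (the $G_\delta$ property, the selection argument, the transfinite/Cantor extraction) transfers verbatim from Sections 1--2.
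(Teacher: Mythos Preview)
Your proposal is correct and follows exactly the route the paper intends: the paper gives no separate proof of Theorem \ref{mainR} but simply asserts that ``with the obvious suitable modifications, all our arguments so far go through,'' and what you have written is precisely those modifications spelled out --- in particular your relative form of the Lemma, with $h_i$ chosen to satisfy $\E(h_i\mid\Zb)=0$ and the orthogonality deduced from the fact that the projection of the three-fold joining to $Z_i\times Z_j$ is the \emph{relatively} independent joining, is the right replacement and the computation goes through verbatim. Your identification of the role of the $\ka$-a.e.\ infinite-to-one hypothesis is slightly imprecise (it is the relative analogue of the nonatomicity assumptions in Proposition \ref{prel}(1) and Theorem \ref{main}(2), ensuring the extension is genuinely nontrivial over $\Zb$, rather than something needed inside the Lemma itself), but this does not affect the argument.
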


\br

\section{$\Ecal(G)^\perp$ systems for an amenable group}

We now further assume that our countable group  $G$ is amenable. We will show that the space $\Ecal(G)^\perp$
does not consist of  identity maps only.

\begin{thm}\label{non-Id}
Any countable infinite amenable group $G$ admits a non-identity action which is disjoint from every ergodic $G$-action.
\end{thm}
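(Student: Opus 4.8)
The plan is to derive Theorem~\ref{non-Id} from Theorem~\ref{mainA}(2). By that result it is enough to produce a single action $\Xb=(X,\Bcal,\mu,\Tb)\in\A(G)$ which is not the identity, whose ergodic decomposition measure $\bar\mu$ has no atoms, and for which $(\bar\mu\times\bar\mu)(D)=1$, where $D=\{(\bar x_1,\bar x_2):\Tb_{\bar x_1}\perp\Tb_{\bar x_2}\}$. Since $\bar\mu\times\bar\mu$ gives the diagonal measure zero, the condition $(\bar\mu\times\bar\mu)(D)=1$ really only concerns \emph{distinct} ergodic components, so the task reduces to exhibiting a Borel family $\{a_t\}_{t\in K}$ of ergodic $G$-actions on $([0,1],\lambda)$, indexed by a standard Borel space $K$ carrying a non-atomic probability measure $\nu$, with $a_s\perp a_t$ whenever $s\neq t$. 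Given such a family, put $X=K\times[0,1]$, $\mu=\nu\times\lambda$ and $T_g(t,z)=(t,a_t(g)z)$; an elementary computation with invariant functions ($z\mapsto f(t,z)$ must be $a_t$-invariant, hence constant, for $\nu$-a.e.\ $t$) shows that the ergodic decomposition of this action is $\mu=\int_K(\delta_t\times\lambda)\,d\nu(t)$ with ergodic components $a_t$, so $\bar\mu=\nu$ is non-atomic and $D\supseteq (K\times K)\setminus\{(t,t):t\in K\}$ has full $\nu\times\nu$-measure. The action is not the identity because each $a_t$, being ergodic on a non-atomic space, is not the identity action.

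To build $K$ and the family I would argue inside the Polish space $\Ecal(G)$ by a Mycielski-type selection. Set $\Ecal_\perp=\{(a,b)\in\Ecal(G)\times\Ecal(G):a\perp b\}$. By Proposition~\ref{GdeltaA} and the fact that $\Ecal(G)$ is a $G_\del$ subset of $\A(G)$, the set $\Ecal_\perp$ is $G_\del$ in $\Ecal(G)\times\Ecal(G)$. It is here that amenability enters: by the amenable version of del Junco's theorem (the extension of \cite{dJ-81} announced in the introduction), for every fixed ergodic $b\in\Ecal(G)$ the slice $\{a\in\Ecal(G):a\perp b\}$ is a dense $G_\del$ in $\Ecal(G)$; the Kuratowski--Ulam theorem then upgrades this to: $\Ecal_\perp$ is comeager in $\Ecal(G)\times\Ecal(G)$. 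Since $G$ is infinite, $\A(G)$ has no isolated points, hence neither does its dense $G_\del$ subset $\Ecal(G)$, so $\Ecal(G)$ is a perfect Polish space; as $\Ecal_\perp$ is symmetric and comeager, Mycielski's theorem yields a Cantor set $K\subseteq\Ecal(G)$ such that $a\perp b$ for all distinct $a,b\in K$. Taking $\nu$ to be any non-atomic Borel probability measure on $K$ and letting $a_t:=t$ (the inclusion $K\hookrightarrow\Ecal(G)\subseteq\A(G)$, which is continuous and so makes $T_g$ above jointly measurable), the construction of the preceding paragraph produces the desired non-identity element of $\Ecal(G)^\perp$.

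The substantive obstacle is the density assertion, namely the amenable analogue of del Junco's theorem that a generic ergodic $G$-action is disjoint from any prescribed ergodic $G$-action; this is the only step that uses amenability and it carries essentially all the weight of Theorem~\ref{non-Id}. By comparison the remaining ingredients are soft: the characterisation Theorem~\ref{mainA}, the passage from ``the disjoint pairs form a comeager set'' to ``there is a Cantor set of pairwise disjoint actions'' via Mycielski's theorem (the category counterpart of the measure-theoretic Cantor-set argument \cite{L-02} used in the proof of Theorem~\ref{main}), the verification that $\A(G)$ has no isolated points, and the identification of the ergodic components of the glued action with the $a_t$ together with the attendant measurability bookkeeping, are all routine.
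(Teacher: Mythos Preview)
Your proposal is correct and follows the same overall architecture as the paper: show that the set of disjoint pairs is a dense $G_\delta$, apply the Mycielski--Kuratowski theorem to extract a Cantor set $K$ of pairwise disjoint (ergodic) actions, equip $K$ with a continuous probability measure, glue the actions over $K$, and invoke Theorem~\ref{mainA}(2).

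The one point of divergence is how density of the disjoint pairs is obtained. You route it through the amenable analogue of del Junco's theorem (for each fixed ergodic $b$ the set $\{a:a\perp b\}$ is residual) together with Kuratowski--Ulam. The paper argues more directly: $\Dcal$ is nonempty (any zero-entropy action versus any Bernoulli action), $\Dcal$ is invariant under the conjugation action of ${\rm MPT}\times{\rm MPT}$, and by Ornstein--Weiss \cite{OW-80} any free ergodic pair has dense orbit, so $\Dcal$ is dense. This is more economical---it avoids the del Junco statement entirely, which in the paper is actually proved \emph{after} Theorem~\ref{non-Id}---so your claim that the del Junco theorem ``carries essentially all the weight'' overstates what is needed. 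Both routes, however, ultimately rest on the Ornstein--Weiss density theorem, so the difference is one of packaging rather than substance. (A small imprecision: the paper's del Junco-type theorem is stated only for \emph{free} ergodic $b$; for your Kuratowski--Ulam step this is harmless since free ergodic actions are comeager.)
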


\begin{proof}

  In Proposition \ref{GdeltaA} we saw that the set $\Dcal$ of pairs of actions in $\A(G) \times \A(G)$ that are disjoint
  is  $G_{\delta}$.
  It is clearly non-empty  since any zero entropy action is disjoint from any Bernoulli
  shift. The group MPT $\times$ MPT acts by conjugation and clearly preserves $\Dcal$.
   By the Ornstein-Weiss theorem \cite{OW-80}, the conjugacy class of any free
ergodic action is dense in $\A(G)$, and thus $\Dcal$ is a dense $G_{\delta}$ and we can apply
the Mycielski-Kuratowski  theorem \cite[Theorem 19.1]{K-91} to deduce the existence of a Cantor set
$K \subset \mathbb{A}$ such that $K \times K \setminus {\text{diagonal}} \subset \Dcal $. Now, by Theorem \ref{main},
any continuous probability measure $\nu$ on $K$
will yield an element $\Xb \in \Ecal(G)^\perp$.

More explicitly, given any continuous probability measure $\nu$ on $K$, we define a  $G$-action
$\hat{T}_g : K \times X \to K \times X, \ g \in G$, by $\hat{T}_g (\Tb,x) = (\Tb, T_gx)$,
where $\Tb = \{T_g\}_{g \in G} \in K \subset \mathbb{A}$,
and elements of $\mathbb{A}$ are $G$-actions on the probability space $(X, \mathcal{B}, \mu)$.
The resulting dynamical system $(K \times X, \nu \times \mu, \hat{T})$ satisfies
the conditions of Theorem \ref{main} and is therefore an element of $\Ecal(G)^\perp$.
\end{proof}

 For the integers, Andres del Junco proved a stronger version of
 Prop. \ref {GdeltaA}. He showed
  that the set of $T$ that are disjoint from any fixed ergodic $S$ forms a dense
  $G_\delta$ subset of MPT. The analogue
  of his result for amenable groups is almost contained in \cite{F-W}, and we take the occasion to give a complete proof.


\br

\begin{thm}
Let $G$ be a countable amenable group. Then, for any fixed free ergodic action
$\Xb =(X, \Bcal, \mu, \{T_g\}_{g \in G}) \in \mathbb{A}(G)$, the set $\Xb^\perp$ is a dense $G_\del$ -set.
\end{thm}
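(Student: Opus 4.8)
The plan is to prove the two assertions separately: that $\Xb^\perp$ is $G_\del$, and that it is dense in $\mathbb{A}(G)$.

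For the $G_\del$ part I would repeat the argument of Proposition \ref{GdeltaA}, now with one coordinate frozen. Fix the ergodic action $\Xb$. For $\Yb \in \mathbb{A}(G)$ let $J(\Xb,\Yb)$ be the (nonempty, weak$^*$-compact) set of joinings of $\Xb$ and $\Yb$, and set $C_n = \{\Yb : \diam J(\Xb,\Yb) \ge 1/n\}$ with respect to a fixed compatible metric on the space of probability measures on $[0,1]\times[0,1]$. The same compactness-of-joinings argument used in Proposition \ref{Gdelta} — pass to limits of joinings along a convergent sequence $\Yb_k \to \Yb$, using the lemma that limits of joinings of converging actions are joinings of the limit action — shows each $C_n$ is closed, so $\Xb^\perp = \bigcap_n (\mathbb{A}(G)\setminus C_n)$ is $G_\del$.

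For density I would use amenability through the Rokhlin lemma / the Ornstein--Weiss machinery, in the form already invoked in the excerpt: since $\Xb$ is free and ergodic, its conjugacy class $\{cT c^{-1} : c \in \mathrm{MPT}\}$ is dense in $\mathbb{A}(G)$ (Ornstein--Weiss \cite{OW-80}). Because the conjugation action of $\mathrm{MPT}$ on $\mathbb{A}(G)$ preserves the relation of disjointness, and $\Xb$ is conjugate to itself, it suffices to exhibit a single action disjoint from $\Xb$ in every conjugacy class, i.e.\ to find one action $\Yb$ with $\Yb \perp \Xb$ whose conjugacy class is dense; but any zero-entropy free ergodic action (e.g.\ a suitable rank-one construction, or more simply the action built from a generic element) is disjoint from... no — here one must be careful, since zero entropy alone does not give disjointness from an arbitrary ergodic $\Xb$. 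The cleaner route, and the one I would actually follow, is the one sketched for the integers by del Junco and carried out for amenable groups essentially in \cite{F-W}: approximate $\Xb$ in $\mathbb{A}(G)$ by conjugates and at the same time perturb so as to kill all nontrivial joinings. Concretely, one shows that for each $n$ the set $\mathbb{A}(G)\setminus C_n$ is dense: given any $\Yb$ and any neighborhood of it, use an $(F,\ep)$-Rokhlin tower for the $G$-action $\Yb$ and independently re-randomize the action on the levels of the tower (a cutting-and-stacking / random-spacer move adapted to Følner sets) to produce $\Yb'$ close to $\Yb$ but whose only ergodic joining with the fixed $\Xb$ is the product — the re-randomization makes the $\Xb$-names statistically independent of the $\Yb'$-names along the tower, and a Følner averaging argument promotes this to disjointness. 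Since each $\mathbb{A}(G)\setminus C_n$ is then open and dense, Baire gives that $\Xb^\perp = \bigcap_n(\mathbb{A}(G)\setminus C_n)$ is a dense $G_\del$.

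The main obstacle is the density step, specifically making the random-spacer perturbation work uniformly over a Følner sequence so that the resulting approximant is genuinely disjoint from the \emph{fixed} $\Xb$ rather than merely of zero relative entropy: one needs an amenable-group Rokhlin lemma (Ornstein--Weiss) together with a quantitative independence estimate showing that after re-randomizing the tower levels the empirical joint distribution of $(\Xb\text{-name},\Yb'\text{-name})$ converges to the product, for \emph{every} ergodic joining simultaneously. This is exactly the content that \cite{F-W} supplies (up to the cosmetic change from the specific construction there to our setting), so I would cite it for the hard estimate and only spell out the Baire-category packaging and the reduction via the conjugation action.
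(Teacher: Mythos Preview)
Your $G_\del$ argument is correct and essentially the same as the paper's (the paper just cites Proposition~\ref{GdeltaA} and takes the slice at $\Xb$, which is automatically $G_\del$).

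For density, however, you abandon the right idea too quickly. You correctly observe that, since conjugation preserves disjointness and Ornstein--Weiss gives density of the conjugacy class of any free ergodic action, it suffices to exhibit a \emph{single} free ergodic $\Yb$ with $\Yb\perp\Xb$. The paper does exactly this, via a short entropy trichotomy that you missed:
\begin{itemize}
\item if $\Xb$ has zero entropy, any Bernoulli $G$-shift works;
\item if $\Xb$ has completely positive entropy, any free ergodic zero-entropy action works;
\item if $\Xb$ has positive entropy and a nontrivial Pinsker factor $\hat\Xb$, apply the first case (together with the remark that $\hat\Xb^\perp$ contains a dense $G_\del$ of zero-entropy actions) to pick a zero-entropy $\Yb\in\hat\Xb^\perp$, and then invoke \cite{GTW} (the extension $\Xb\to\hat\Xb$ is relatively CPE) to conclude $\Yb\perp\Xb$.
\end{itemize}
That is the entire density proof.

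Your fallback route --- a del Junco--style Rokhlin-tower perturbation showing each $\A(G)\setminus C_n$ is dense --- is in principle a legitimate strategy, but as written it is a gap rather than a proof. You defer the whole content (``the hard estimate'') to \cite{F-W}, yet that paper does not contain this statement; the present paper explicitly notes the result is only ``almost contained'' there. Your description of the perturbation (``re-randomize the tower levels so the only ergodic joining is the product'') is too vague to assess: producing a nearby $\Yb'$ that is genuinely disjoint from the \emph{fixed} $\Xb$ --- not merely independent along one tower --- is exactly the delicate point in del Junco's argument, and you have not indicated how to carry it out for a general amenable $G$. So the density half of your proposal does not stand on its own; the entropy trichotomy above is both simpler and complete.
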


\begin{proof}
By Proposition \ref{GdeltaA} we know that the set
of pairs $\Dcal = \{(\Xb, \Yb) : \Xb \perp \Yb\}$  is a $G_\del$ subset of $\mathbb{A} \times \mathbb{A}$.
Again, by Ornstein-Weiss' theorem \cite{OW-80}, it suffices to show that for any free ergodic action $\Xb$ there is
at least one free ergodic action $\Yb$ disjoint from it.
We consider the following cases:

\br

{\bf Case 1: $\Xb$ has zero entropy.}

In this case any Bernoulli shift is disjoint from $\Xb$.
Note that, as $0$-entropy actions are generic in $\mathbb{A}$ it follows that
$\Xb^\perp$ also contains a dense $G_\del$ subset of $0$-entropy systems.

\br

{\bf Case 2: $\Xb$ has completely positive entropy.}

In this case any zero-entropy system is in $\Xb^\perp$.

\br

{\bf Case 3: $\Xb$ has positive entropy but admits a non-trivial Pinsker factor.}

Let $\hat{\Xb}$  be the Pinsker factor of $\Xb$.
Choose a zero-entropy system $\Yb = (Y, \nu, \{S_g\}_{g \in G})$ in $\hat{\Xb}^\perp$ (see the remark in Case 1).
Then, since the extension $\Xb \to \hat{\Xb}$ is relatively completely positive entropy, it follows from
\cite{GTW} that also $\Xb \perp \Yb$.
\end{proof}

\br

\begin{question}
Does the same hold for an arbitrary countable infinite group $G$ ? I.e.
is $\mathcal{E}(G)^\perp \not = \{\text{identity map}\}$ for any such $G$ ?
\end{question}

\br


Here is a simple example for $F_2$.
Take $X$ to be the two-sphere $S^2$ with Lebesgue measure $\mu$.
For $\alpha \in [0,1)$ let $A_\alpha, B_\alpha$ be the rotations by $\alpha$
around the $z$ and $x$ axis respectively. This defines an $F_2$ strictly ergodic action, and for a.e. pair
$\alpha, \beta$ the corresponding actions are disjoint .
In fact, the product system $(X \times X, F_2)$, where $F_2$ acts via  $\langle A_\alpha, B_\alpha \rangle$
on the first $X$ and via $ \langle A_\beta, B_\beta \rangle$ on the second, is minimal.
As these systems are isometric this implies strict ergodicity. Thus, $\mu \times \mu$ is the
unique joining of the two systems.

Applying Theorem \ref{mainA} we conclude that the diagonal $F_2$-action on $[0,1) \times S^2$
defined by:
$$
T_a (t, x) = (t, A_tx), \qquad T_b (t, x) = (t, B_tx),
$$
(where $F_2 = \langle a, b \rangle$)
is an element of $\Ecal(F_2)^\perp$.

\br

\section{A short proof of the product theorem}

\begin{thm}
For any countable group $G$, the class $\Ecal(G)^\perp$ is closed under products.
\end{thm}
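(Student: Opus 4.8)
The plan is to characterise membership in $\Ecal(G)^\perp$ by disjointness from an arbitrary fixed ergodic system and to work at the level of joinings. So let $\Xb=(X,\mathcal B,\mu,\Tb)$ and $\Xb'=(X',\mathcal B',\mu',\Tb')$ lie in $\Ecal(G)^\perp$, write their ergodic decompositions as $\mu=\int_{\bar X}\mu_{\bar x}\,d\bar\mu(\bar x)$ and $\mu'=\int_{\bar X'}\mu'_{\bar x'}\,d\bar\mu'(\bar x')$ with ergodic components $\Tb_{\bar x},\Tb'_{\bar x'}$, and fix an arbitrary ergodic $\Yb=(Y,\mathcal C,\eta,\Sb)\in\A(G)$; the goal is to show that every joining $\rho$ of $\Xb\times\Xb'$ with $\Yb$ equals $\mu\times\mu'\times\eta$. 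First I would record the cheap facts: $\rho|_{X\times X'}=\mu\times\mu'$ by definition of a joining, while $\rho|_{X\times Y}$ and $\rho|_{X'\times Y}$ are joinings of $\Xb,\Yb$ and of $\Xb',\Yb$ respectively, hence equal $\mu\times\eta$ and $\mu'\times\eta$ since $\Xb,\Xb'\in\Ecal(G)^\perp$ and $\Yb$ is ergodic. Next I would disintegrate $\rho$ over the factor $\bar X\times\bar X'$, which carries the trivial $G$-action and, by the previous sentence, push-forward measure $\bar\mu\times\bar\mu'$; writing $\rho=\int\rho^{(\bar x,\bar x')}\,d(\bar\mu\times\bar\mu')$, each $\rho^{(\bar x,\bar x')}$ is $(\Tb_{\bar x}\times\Tb'_{\bar x'}\times\Sb)$-invariant, its projection to the fibre $Z\times Z'$ is $\mu_{\bar x}\times\mu'_{\bar x'}$, and its projection to $Y$ is an $\Sb$-invariant measure averaging to $\eta$, hence equal to $\eta$ for a.e.\ $(\bar x,\bar x')$ because $\eta$ is an extreme point of the $\Sb$-invariant probabilities. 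Thus each $\rho^{(\bar x,\bar x')}$ is a joining of the product system $\Tb_{\bar x}\times\Tb'_{\bar x'}$ with $\Yb$, and the theorem reduces to showing that for $\bar\mu\times\bar\mu'$-a.e.\ $(\bar x,\bar x')$ one has $\Tb_{\bar x}\times\Tb'_{\bar x'}\perp\Yb$ --- for then $\rho^{(\bar x,\bar x')}=\mu_{\bar x}\times\mu'_{\bar x'}\times\eta$ a.e.\ and integrating back gives $\rho=\mu\times\mu'\times\eta$.

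For that almost-everywhere statement I would use two auxiliary facts. The first is exactly the measurable-selection (Kuratowski--Ryll-Nardzewski) argument from the proof of Theorem \ref{main}(1): if $\Wb\in\Ecal(G)^\perp$ and $\Vb$ is ergodic, then the ergodic component $\Wb_{\bar w}$ satisfies $\Wb_{\bar w}\perp\Vb$ for a.e.\ $\bar w$, since otherwise a Borel cross-section of non-trivial ergodic joinings of $\Wb_{\bar w}$ with $\Vb$, integrated over the offending positive-measure set of $\bar w$, would produce a non-trivial joining of $\Wb$ with $\Vb$. The second is the standard fact that the product of two disjoint ergodic systems is ergodic (each ergodic component of the product is a joining of the two factors --- its marginals are invariant and average to the ergodic measures, hence equal them --- so each such component is the independent joining).

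Now I would chain these. Applying the first fact to $\Xb$ and $\Yb$ gives a full-measure set of $\bar x$ with $\Tb_{\bar x}\perp\Yb$; for such $\bar x$ the product $\Tb_{\bar x}\times\Yb$ is \emph{ergodic}, so I can apply the first fact a second time, now to $\Xb'\in\Ecal(G)^\perp$ and to the ergodic system $\Tb_{\bar x}\times\Yb$, obtaining a full-measure set of $\bar x'$ (depending on $\bar x$) with $\Tb'_{\bar x'}\perp(\Tb_{\bar x}\times\Yb)$. For such a pair $(\bar x,\bar x')$ I would reassociate joinings: a joining $\sigma$ of $\Tb_{\bar x}\times\Tb'_{\bar x'}$ with $\Yb$ restricts on the $Z\times Y$ coordinates to a joining of $\Tb_{\bar x}$ with $\Yb$, hence to $\mu_{\bar x}\times\eta$, so $\sigma$ is in fact a joining of $\Tb'_{\bar x'}$ with the ergodic system $\Tb_{\bar x}\times\Yb$ and therefore equals $\mu_{\bar x}\times\mu'_{\bar x'}\times\eta$; that is, $\Tb_{\bar x}\times\Tb'_{\bar x'}\perp\Yb$. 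A Fubini argument then shows that the set of such $(\bar x,\bar x')$ --- which is measurable, disjointness being Borel by Proposition \ref{GdeltaA} and the component maps being measurable --- has full $\bar\mu\times\bar\mu'$-measure, as required.

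The step I expect to be the real point is the detour through the \emph{ergodic} intermediate system $\Tb_{\bar x}\times\Yb$. One cannot simply say ``$\Xb\times\Xb'\perp\Yb$ because a.e.\ pair of ergodic components of $\Xb\times\Xb'$ is disjoint from $\Yb$'', because a joining could a priori correlate the trivial-action factors $\bar X$ and $\bar X'$; it is the disintegration over $\bar X\times\bar X'$ combined with the observation that each $\Tb_{\bar x}\times\Yb$ is genuinely ergodic --- so that membership of $\Xb'$ in $\Ecal(G)^\perp$ becomes applicable --- that dissolves this difficulty. The remaining ingredients (the reassociation of joinings, the Fubini bookkeeping, and the measurability of the disjointness sets) are routine and follow patterns already present in the paper.
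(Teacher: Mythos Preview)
Your proof is correct and follows essentially the same route as the paper's: both reduce to showing that for almost every pair $(\bar x,\bar x')$ the product of ergodic components $\Tb_{\bar x}\times\Tb'_{\bar x'}$ is disjoint from the fixed ergodic $\Yb$, and both accomplish this by the chain ``$\Tb_{\bar x}\perp\Yb$ $\Rightarrow$ $\Tb_{\bar x}\times\Yb$ ergodic $\Rightarrow$ $\Tb'_{\bar x'}\perp(\Tb_{\bar x}\times\Yb)$ $\Rightarrow$ triple independence.'' The only cosmetic difference is that the paper first identifies the ergodic decomposition of $\Xb\times\Xb'$ (as the products $\Tb_{\bar x}\times\Tb'_{\bar x'}$, using Theorem~\ref{mainA}) and then argues componentwise, whereas you disintegrate an arbitrary joining $\rho$ over $\bar X\times\bar X'$ directly; your reassociation step and the paper's triple-joining argument are the same computation.
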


\begin{proof}
Again, in order to make the proof more transparent, we handel the case $G = \Z$.
The proof of the general case is similar (where one should replace, e.g. $T_{\bar{x}}$ by $\Tb_{\bar{x}}$
for $\Tb = \{T_g\}_{g \in G} \in \A(G)$).

Suppose $(X, \Bcal, \mu,T)$ and $(Y,\Ccal, \nu, S)$
are in $\Ecal^\perp$ and let $(Z, \Dcal, \rho, R)$ be ergodic.
We need to show that $T \times S$ is disjoint from $R$.
For $\bar{\mu} \times \bar{\nu}$-a.e. pair $(\bar{x}, \bar{y})$,
$T_{\bar{x}} \perp S_{\bar{y}}$ and therefore their product is ergodic.
It follows that
$$
\mu \times \nu = \int_{\bar{X} \times \bar{Y}} T_{\bar{x}} \times S_{\bar{y}} \ d (\bar{\mu} \times \bar{\nu})(\bar{x}, \bar{y})
$$
is the ergodic decomposition of $T \times S$.
Thus, we have to show that for a.e. pair $(\bar{x}, \bar{y})$,  \ $T_{\bar{x}} \times S_{\bar{y}} \perp R$.

Consider, for a fixed $\bar{y}$,  joinings of the triples $(T_{\bar{x}}, S_{\bar{y}}, R)$.
For $\bar{\mu}$ a.e. $\bar{x}$, \ $T_{\bar{x}} \perp R$. Thus, in these joinings $T_{\bar{x}}$ is independent of $R$.
Now $T_{\bar{x}} \times R$ is ergodic and hence, for $\bar{\nu}$-a.e. $\bar{y}$, $T_{\bar{x}} \times R$
is independent of $S_{\bar{y}}$, hence for such $\bar{y}$ all three components are independent for
$\bar{\mu} \times \bar{\nu}$-a.e. pair $(\bar{x}, \bar{y})$.
\end{proof}

\end{document}